\theoremstyle{definition}
\newtheorem{theorem}{Theorem}
\newtheorem{lemma}{Lemma}
\newtheorem{definition}{Definition}
\newtheorem{example}{Example}
\newtheorem{remark}{Remark}
\newtheorem{assumption}{Assumption}
\def\T{{ \mathrm{\scriptscriptstyle T} }}
\begin{document}

\title{Infinitely imbalanced binomial regression\\
and deformed exponential families}
\author{T. Sei}

\maketitle

\begin{abstract}
The logistic regression model is known to
converge to a Poisson point process model 
if the binary response tends to infinitely imbalanced.
In this paper, it is shown that this phenomenon is universal
in a wide class of link functions on binomial regression.
The proof relies on the extreme value theory.
For the logit, probit and complementary log-log link functions,
the intensity measure of the point process becomes an exponential family.
For some other link functions, deformed exponential families appear.
A penalized maximum likelihood estimator for the Poisson point process model is suggested.
\\
Keywords:
binomial regression;
extreme value theory;
imbalanced data;
Poisson point process;
$q$-exponential family.
\end{abstract}

\section{Introduction}

Let $\{(X_i,Y_i)\}_{i=1}^m$
be $m$ independently and identically distributed observable data on $\mathbb{R}^p\times\{0,1\}$.
The conditional distribution of $Y_i$ given $X_i$ is assumed to be
\begin{align}
 P(Y_i=1\mid X_i,a,b) = G(a + b^\T X_i),
 \quad a\in\mathbb{R},
 \quad b\in\mathbb{R}^p,
 \label{eq:model}
\end{align}
where $G(\cdot)$ is a one-dimensional cumulative distribution function.
The inverse function $G^{-1}(p)=\sup\{z: G(z)\leq p\}$ is the link function
in terms of generalized linear models.
Denote the marginal distribution of $X_i$ by $F(dX_i)$.
The distribution function $G$ is typically the logistic, standard normal or Gumbel distributions.
The corresponding link functions are the logit, probit and complementary log-log functions, respectively.
For the three examples, the log-likelihood function of (\ref{eq:model}) is concave; see \citet{Wedderburn1976}.

Our interest is the situation that the data is highly imbalanced.
In other words, the probability of success is almost zero.
Examples of such cases are fraud detection, medical diagnosis, political analysis and so forth.
See e.g.\ \citet{Bolton2002}, \citet{Chawla2004}, \citet{Jin2005}, and \citet{King2001}.
For the data without covariates,
Poisson's law of rare events is well known:
if $P(Y_i=1)=\lambda/m + o(m^{-1})$, then
the probability distribution of $\sum_{i=1}^m Y_i$ converges to the Poisson distribution
with the mean parameter $\lambda$.
From this observation, for highly imbalanced data,
it is natural to consider that the true parameter $(a,b)$ in (\ref{eq:model})
depends on $m$, say $(a_m,b_m)$, and $G(a_m)\to 0$ as $m\to\infty$.

\citet{Owen2007} showed that the maximum likelihood estimator of the logistic regression model converges to
that of an exponential family if $\sum_{i=1}^m Y_i$ is fixed and $m$ goes to infinity.
This result is roughly derived as follows.
Consider the model (\ref{eq:model}) with the logistic distribution $G(z)=e^z/(1+e^z)$.
Take $a_m(\alpha)=-\log m+\alpha$ and $b_m(\beta)=\beta$ for any fixed $\alpha$ and $\beta$.
Then we obtain
\begin{align}
 P(Y_i=1\mid X_i,a_m(\alpha),b_m(\beta))
 = \frac{e^{-\log m+\alpha+\beta^\T X_i}}{1+e^{-\log m+\alpha+\beta^\T X_i}}
 = \frac{e^{\alpha + \beta^\T X_i}}{m} + o(m^{-1})
 \label{eq:Owen}
\end{align}
as $m\to\infty$.
By Bayes' theorem,
the conditional density of $X_i$ given $Y_i=1$ with respect to the distribution $F(dX_i)$ is, at least formally,
\begin{align}
 \frac{e^{\beta^\T X_i}}{\int e^{\beta^\T x}F(dx)} + o(1).
 \label{eq:Owen-conditional}
\end{align}
This is an exponential family with the sufficient statistic $x_i$, and Owen's result follows.

\begin{remark} \label{rem:Owen}
 To be precise, \cite{Owen2007} proved the convergence result under a different setting from here.
 He assumed that the true conditional distribution of $X_i$ given $Y_i=j$, $j\in\{0,1\}$,
 is any distribution $F_j$.
 In our setting, $F_0$ is asymptotically equal to $F$,
 and the density of $F_1$ with respect to $F$ should satisfy
 (\ref{eq:Owen-conditional}).
 In other words, our setting becomes misspecified unless this equality is satisfied.
 We discuss this point again in Section~\ref{section:discussion}.
\end{remark}

\citet{Warton2010} pointed out that
the likelihood of logistic regression converges to a Poisson point process model
with a specific form of intensity.
Indeed, by (\ref{eq:Owen}),
the probability $P(Y_i=1, X_i\in A)$
is approximately $m^{-1}\int_A e^{\alpha+\beta^\T x}F(dx)$ for any compact subset $A$ of $\mathbb{R}^p$.
Therefore, by Poisson's law of rare events,
the number of observations $X_i$ for which $X_i\in A$ and $Y_i=1$
is approximately distributed according to the Poisson distribution
with mean $\int_A e^{\alpha+\beta^\T x}F(dx)$.
This is the Poisson point process with the intensity measure $e^{\alpha+\beta^\T x}F(dx)$.

In this paper, we consider the limit of various binomial regression models
other than the logistic model.
As expected from the result on logistic regression,
the limit becomes a Poisson point process.
A remarkable fact we prove is that the intensity measure of the point process
should be a $q$-exponential family for some real number $q$.
The $q$-exponential family, also called the deformed exponential family or $\alpha$-family,
is recently much investigated in the literature of statistical physics and information geometry;
see e.g.\ \citet{Amari1985}, \citet{AmariNagaoka2000}, \citet{AmariOhara2011}, \citet{Naudts2002}, \citet{Naudts2010}, and \citet{Tsallis1988}.
The precise definition is given in Section~\ref{section:main}.
The proof relies on the theory of extreme values.
For example, for the probit or complementary log-log link functions,
the limit of binomial regression is the usual exponential family as with the logit link.
On the other hand, if $G$ is the Cauchy distribution,
then the limit becomes a $q$-exponential family with $q=2$.
If the uniform distribution is used, $q=0$.

As a related work, \citet{Ding2011}
introduced the $t$-logistic regression,
that uses the $q$-exponential family for binary response, where $q=t$.
In Section~\ref{section:example},
we show that the $t$-logistic regression
converges to the $q$-exponential family if $q\geq 0$.

In Section~\ref{section:estimator}, we study a penalized maximum likelihood estimator
on the $q$-exponential family of intensity measures.
For some special cases, the estimator is reduced to a known admissible estimator
for the Poisson mean parameter; see \citet{GhoshYang1988}.

Some related problems are discussed in Section~\ref{section:discussion}.

\section{Imbalanced asymptotics of binomial regression} \label{section:main}

For each real number $q$, define the $q$-exponential function by
\begin{align}
 \exp_q(z) =
 \left\{\begin{array}{ll}
  e^z, & \mbox{if}\ q=1,\\
  {[1 + (1-q) z]}_+^{1/(1-q)},& \mbox{if}\ q\neq 1,
 \end{array}
 \right.
 \label{eq:expq}
\end{align}
where $[z]_+=\max(z,0)$ and $[0]_+^{-1}=\infty$.
This is inverse of the Box-Cox transformation.
Note that 
$\exp_q(z)=\infty$ for $z\geq -1/(1-q)$ if $q>1$.
The function $\exp_q(z)$ is convex if and only if $q\geq 0$.

Consider the binomial regression model (\ref{eq:model})
and put the following assumption on the distribution function $G$.

\begin{assumption} \label{assumption:GEV}
 There exist $q>0$, $c_m\in\mathbb{R}$ and $d_m>0$ such that
 \begin{align}
  G(c_m + d_m z) = \frac{1}{m}\exp_q(z) + o(m^{-1})
    \label{eq:GEV}
 \end{align}
 as $m\to\infty$ for each $z\in\mathbb{R}$.
\end{assumption}

In the extreme value theory,
it is known that there is no other asymptotic form than (\ref{eq:GEV})
as long as it exists; see e.g.\ \citet[Theorem 1.1.2 and 1.1.3]{deHaan2006}.
The number $q$ controls the lower tail structure of $G$.
For example, the logistic distribution satisfies
Assumption~\ref{assumption:GEV}
with $q=1$, $c_m=-\log m$ and $d_m=1$.
Other examples including the normal and Cauchy distributions are considered in Section~\ref{section:example}.

We define
\begin{align}
a_m(\alpha)=c_m+d_m\alpha
\quad \mbox{and}\quad b_m(\beta)=d_m\beta
\label{eq:a_m-b_m}
\end{align}
for $(\alpha,\beta)\in\mathbb{R}\times\mathbb{R}^p$
by using the sequences $c_m$ and $d_m$ that satisfy (\ref{eq:GEV}).
Denote the probability law of $\{(X_i,Y_i)\}_{i=1}^m$
under the true parameter $(a_m(\alpha),b_m(\beta))$
by $P_{m,\alpha,\beta}$.

Now the asymptotic form like (\ref{eq:Owen}) follows from the assumption.
Indeed,
\begin{align*}
 P_{m,\alpha,\beta}(Y_i=1\mid X_i)
 &= G(a_m(\alpha)+b_m(\beta)^\T X_i) \\
 &= G(c_m+d_m(\alpha+\beta^\T X_i)) \\
 &= \frac{1}{m}\exp_q(\alpha+\beta^\T X_i) + o(m^{-1}).
\end{align*}
Therefore, as in the logistic regression, we expect that the binomial regression model with $G$
converges to the Poisson point process under Assumption~\ref{assumption:GEV}.

We give a lemma before the main result.

\begin{lemma} \label{lem:Owen-mar-cond}
 Let $(\alpha,\beta)\in\mathbb{R}\times\mathbb{R}^p$.
 Let $A$ be any compact subset of $\mathbb{R}^p$
 such that the function $\exp_q(\alpha+\beta^\T x)$ is finite over $x\in A$.
 Then the following equation holds:
 \begin{align}
  P_{m,\alpha,\beta}(Y_i=1, X_i\in A)
  &= \frac{\lambda(A)}{m} + o(m^{-1}),
  \label{eq:lem-marginal}
 \end{align}
 where $\lambda(A)=\int_A\exp_q(\alpha+\beta^\T x)F(dx)$.
\end{lemma}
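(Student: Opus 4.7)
The plan is to pass the pointwise asymptotic in Assumption~\ref{assumption:GEV} through the conditional integral. Writing the joint probability by conditioning on $X_i$,
\begin{equation*}
P_{m,\alpha,\beta}(Y_i=1,\ X_i\in A)=\int_A G\bigl(c_m+d_m(\alpha+\beta^\T x)\bigr)\,F(dx),
\end{equation*}
so after multiplying by $m$ the claim reduces to showing
\begin{equation*}
\int_A m\,G\bigl(c_m+d_m(\alpha+\beta^\T x)\bigr)\,F(dx)\ \longrightarrow\ \int_A \exp_q(\alpha+\beta^\T x)\,F(dx).
\end{equation*}

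By Assumption~\ref{assumption:GEV}, the integrand $H_m(z):=mG(c_m+d_m z)$, evaluated at $z=\alpha+\beta^\T x$, converges pointwise on $A$ to $\exp_q(\alpha+\beta^\T x)$. To exchange limit and integral I would upgrade this to uniform convergence on $A$. The affine map $\phi(x)=\alpha+\beta^\T x$ sends the compact set $A$ to a compact interval $J\subset\mathbb{R}$, which by hypothesis is contained in the region where $\exp_q$ is finite, and there $\exp_q$ is continuous. Each $H_m$ is nondecreasing because $G$ is a distribution function and $d_m>0$; hence P\'olya's strengthening of Dini's theorem (pointwise convergence of monotone functions to a continuous limit on a compact set is uniform) yields $H_m\to\exp_q$ uniformly on $J$. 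Since $F$ is a probability measure and $F(A)\leq 1$, uniform convergence of the integrands transfers immediately to convergence of the integrals, and dividing by $m$ gives~(\ref{eq:lem-marginal}).

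The only real subtlety is this interchange of limit and integral, since Assumption~\ref{assumption:GEV} is stated only pointwise; the monotonicity of $G$ is what salvages uniformity on the compact image $J$. Equivalently, once uniformity is in hand one could invoke dominated convergence with the constant dominator $\sup_{z\in J}\exp_q(z)+1$, valid for all sufficiently large $m$. Beyond this, the argument is routine.
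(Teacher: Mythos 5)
Your proof is correct and follows essentially the same route as the paper: write the probability as an integral of $G(c_m+d_m(\alpha+\beta^\T x))$ over $A$, then upgrade the pointwise limit from Assumption~\ref{assumption:GEV} to uniform convergence on the compact image of $A$ using the monotonicity of $z\mapsto mG(c_m+d_m z)$ and the continuity of $\exp_q$ (the paper cites Lemma~2.10.1 of Galambos for exactly this P\'olya-type step). The only cosmetic difference is that the paper pushes the measure forward to $F^*$ on $A^*=\{\alpha+\beta^\T x : x\in A\}$ before arguing, and your ``compact interval $J$'' should strictly be the compact image (or its convex hull, which still lies in the finiteness region); neither point affects the argument.
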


The proof of Lemma~\ref{lem:Owen-mar-cond} is given in Appendix.

\begin{theorem}
 \label{theo:main1}
 Denote the observations $X_i$ for which $Y_i=1$ by $\{x_i\}_{i=1}^n$.
 Then, under $P_{m,\alpha,\beta}$,
 the set $\{x_i\}_{i=1}^n$
 converges in law to the Poisson point process with
 the intensity measure
 \begin{align}
 \lambda(dx)=\exp_q(\alpha+\beta^\T x)F(dx)
  \label{eq:q-exponential-intensity}
 \end{align}
 as $m\to\infty$.
 More precisely, we have
 \begin{align}
  \lim_{m\to\infty} P_{m,\alpha,\beta}
  \left(\#\{i\mid x_i\in A_j\} = n_j,\ j=1,\ldots,J\right)
  = \prod_{j=1}^J \frac{\lambda(A_j)^{n_j}e^{-\lambda(A_j)}}{n_j!}
   \label{eq:main1}
 \end{align}
 for any positive integer $J$, non-negative integers $n_j$
 and mutually disjoint compact subsets $A_j$ of $\mathbb{R}^p$
 such that $\exp_q(\alpha+\beta^\T x)$ is finite over $x\in A_j$.
\end{theorem}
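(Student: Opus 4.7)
The plan is to reduce the theorem to a classical multinomial-to-Poisson limit, using Lemma~\ref{lem:Owen-mar-cond} as the essential input. The statement in (\ref{eq:main1}) is precisely the convergence of finite-dimensional count distributions of the random set $\{x_i\}$ to those of a Poisson point process with intensity measure $\lambda(dx)$, so it suffices to handle a fixed collection of disjoint compact sets $A_1,\ldots,A_J$ on which $\exp_q(\alpha+\beta^\T x)$ is finite, together with fixed non-negative integer targets $n_1,\ldots,n_J$.

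First, I would exploit the disjointness of the $A_j$ to observe that, for each index $i$, the events $\{Y_i=1,X_i\in A_j\}$ for $j=1,\ldots,J$ are mutually exclusive. By Lemma~\ref{lem:Owen-mar-cond},
\begin{align*}
 p_j^{(m)} := P_{m,\alpha,\beta}(Y_i=1,\,X_i\in A_j) = \frac{\lambda(A_j)}{m} + o(m^{-1}),
 \qquad j=1,\ldots,J,
\end{align*}
and I would set $p_0^{(m)} := 1 - \sum_{j=1}^J p_j^{(m)}$. Since the $(X_i,Y_i)$ are i.i.d., the count vector $(N_1,\ldots,N_J)$ with $N_j := \#\{i:X_i\in A_j,\,Y_i=1\}$ consists of the first $J$ coordinates of a multinomial$(m;p_1^{(m)},\ldots,p_J^{(m)},p_0^{(m)})$ random vector, so with $n := \sum_{j=1}^J n_j$,
\begin{align*}
 P_{m,\alpha,\beta}(N_j=n_j,\, j=1,\ldots,J)
 = \frac{m!}{n_1!\cdots n_J!\,(m-n)!}\,
   \prod_{j=1}^J \bigl(p_j^{(m)}\bigr)^{n_j}\,\bigl(p_0^{(m)}\bigr)^{m-n}.
\end{align*}
Taking $m\to\infty$ with each $n_j$ fixed, standard asymptotics give $m!/(m-n)!\sim m^n$, $(p_j^{(m)})^{n_j}=(\lambda(A_j)/m)^{n_j}(1+o(1))$, and $(p_0^{(m)})^{m-n}\to \exp(-\sum_{j=1}^J\lambda(A_j))$, whose product yields exactly $\prod_{j=1}^J \lambda(A_j)^{n_j}e^{-\lambda(A_j)}/n_j!$, as claimed in (\ref{eq:main1}).

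The only real work is bookkeeping the $o(m^{-1})$ error through the multiplicative expression. Each factor $(p_j^{(m)})^{n_j}$ contributes a multiplicative error $(1+o(1))^{n_j}\to 1$ because $n_j$ is fixed, and the key limit $(p_0^{(m)})^{m-n}\to e^{-\sum_j\lambda(A_j)}$ follows from the expansion $\log(1-x)=-x+O(x^2)$ applied to $x=\sum_j p_j^{(m)}=O(m^{-1})$, which gives $(m-n)\log p_0^{(m)} = -(m-n)\sum_j\lambda(A_j)/m + o(1)\to -\sum_j\lambda(A_j)$. No deeper obstacle arises; the content of the theorem is essentially absorbed into Lemma~\ref{lem:Owen-mar-cond} and the classical Poisson law of rare events in its multivariate multinomial form.
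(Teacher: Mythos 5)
Your proposal is correct and follows essentially the same route as the paper: both reduce the statement to the multinomial distribution of the count vector $(N_1,\ldots,N_J)$ and then apply the multivariate Poisson law of rare events with Lemma~\ref{lem:Owen-mar-cond} supplying the cell probabilities $\lambda(A_j)/m + o(m^{-1})$. The only difference is that the paper cites the law of rare events without computation, while you write out the standard multinomial-to-Poisson asymptotics explicitly.
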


The equation (\ref{eq:main1}) is consistent with
the definition of weak convergence of point processes; see \citet{Embrechts1997}.

\begin{proof}[Proof of Theorem~\ref{theo:main1}]
Define
\begin{align*}
 x(A) &= \#\{i\in\{1,\ldots,n\}\mid x_i\in A\}\\
 &= \#\{i\in\{1,\ldots,m\}\mid (X_i,Y_i)\in A\times \{1\}\}.
\end{align*}
Since $\{(X_i,Y_i)\}_{i=1}^m$ is an independent and identically distributed sequence,
the random vector $(x(A_1),\ldots,x(A_J))$ for the disjoint compact subsets $\{A_j\}_{j=1}^J$
is distributed as the multinomial distribution.
Then, by Lemma~\ref{lem:Owen-mar-cond} and Poisson's law of rare events,
$(x(A_1),\ldots,x(A_J))$ converges to independent Poisson random variables
with intensity $(\lambda(A_1),\ldots,\lambda(A_J))$.
The proof is completed.
\end{proof}

By Theorem~\ref{theo:main1}, the logistic regression model converges to the Poisson point process model
with intensity $\exp(\alpha+\beta^\T x)F(dx)$ as \citet{Warton2010} showed.

\begin{definition}
 For each $q\in\mathbb{R}$,
 we call the set of intensity measures (\ref{eq:q-exponential-intensity})
  the $q$-exponential family of intensity measures.
 Denote the law of the process $\{x_i\}_{i=1}^n$ with respect to (\ref{eq:q-exponential-intensity}) by $P_{\alpha,\beta}^{(q)}$.
\end{definition}

The $q$-exponential family of intensity measures is closely related to
the $q$-exponential family of probability measures as follows.
Denote the total intensity by
\begin{align}
\Lambda_q(\alpha,\beta)
= \int_{\mathbb{R}^p} \exp_q(\alpha+\beta^\T x)F(dx).
\label{eq:total-intensity}
\end{align}
Assume $\Lambda_q(\alpha,\beta)<\infty$.
Then the likelihood of $P_{\alpha,\beta}^{(q)}$ is
\begin{align}
 \frac{e^{-\Lambda_q(\alpha,\beta)}}{n!} \prod_{i=1}^n \exp_q(\alpha+\beta^\T x_i),
  \label{eq:limit-likelihood}
\end{align}
where
the base measure of $n$ is the counting measure on $\{0,1,\cdots\}$,
and the base measure of $x_i$ for each $i$ is the distribution $F(dx_i)$.
In (\ref{eq:limit-likelihood}),
the number $n$ of observed points
is marginally distributed according to the Poisson distribution with intensity $\Lambda_q(\alpha,\beta)$.
Each point $x_i$ is independently distributed according to
 the $q$-exponential family defined by the probability density function
 \begin{align}
  \frac{\exp_q(\alpha+\beta^\T x_i)}{\Lambda_q(\alpha,\beta)}
   \label{eq:q-exponential-family}
 \end{align}
 with respect to $F(dx)$.
The $q$-exponential family is also called
 the deformed exponential family or the
 $\alpha$-family; see \citet{AmariNagaoka2000} for the $\alpha$-family,
where $\alpha=2q-1$ should be distinguished with the regression coefficient $\alpha$.
It is known that the density (\ref{eq:q-exponential-family}) is also written as
$\exp_q(\theta^\T x_i-\psi_q(\theta))$
with appropriate $\theta$ and $\psi_q(\theta)$; see  e.g.\ \citet{AmariOhara2011}.
However, we do not use this parametrization since
the quantity $\Lambda_q(\alpha,\beta)$ remains in
the whole likelihood (\ref{eq:limit-likelihood}).


We conjecture that the maximum likelihood estimator of the binomial regression model $P_{m,\alpha,\beta}$ converges to that of
the Poisson process model $P_{\alpha,\beta}^{(q)}$ under mild conditions.
However, we only give experimental results in Section~\ref{section:example}.
Instead, we study the estimation problem of the limit model $P_{\alpha,\beta}^{(q)}$ in Section~\ref{section:estimator}.
See also Section~\ref{section:discussion} for further discussion.

\section{Examples} \label{section:example}

In this section, we give some examples of distributions $G$
satisfying Assumption~\ref{assumption:GEV},
and experimental results on the maximum likelihood estimation.

 Even if $G$ satisfies Assumption~\ref{assumption:GEV},
 the sequences $c_m$ and $d_m$ are not uniquely determined.
 A unified choice is known  (see \citet[Theorem~2.1.4--2.1.6]{Galambos1987}).
 However, in the following examples, one of possible pairs $(c_m,d_m)$
 is explicitly given for each case.

 For the logistic distribution and the Gumbel distribution $G(z)=1-\exp(-e^z)$ on minimum values, we have
 \begin{align}
 q=1,
 \quad c_m = -\log m,
 \quad d_m=1.
  \label{eq:logistic-c_m}
 \end{align}
 For the standard normal distribution, we have
  \begin{align}
 q=1,
 \quad c_m=-(2\log m)^{1/2} + \frac{\log(\log m) + \log(4\pi)}{2(2\log m)^{1/2}},
 \quad d_m=(2\log m)^{-1/2}.
 \label{eq:normal-c_m}
 \end{align}
 See e.g.\ \citet[Section 2.3.2]{Galambos1987}.
 For the Cauchy distribution, we have
 \begin{align}
  q=2,
  \quad c_m = -m/\pi,
  \quad d_m=m/\pi.
  \label{eq:Cauchy-c_m}
 \end{align}
 For other examples such as $t$-distribution and Pareto distributions,
 refer to \citet{Galambos1987} and \citet{Embrechts1997}.

 We briefly study the $t$-logistic regression proposed by \citet{Ding2011}.
 For each real number $t$, let $G_t(z)=\exp_t(z-\gamma_t(z))$,
 where $\exp_t$ denotes the $q$-exponential function with $q=t$
 and $\gamma_t(z)$ is uniquely determined by
 \begin{align}
 \exp_t(z-\gamma_t(z)) + \exp_t(-\gamma_t(z)) = 1.
  \label{eq:t-logistic-cond}
 \end{align}
 We call $G_t(z)$ the $t$-logistic distribution.
 Uniqueness of $\gamma_t(z)$ follows from
 strictly monotone property of the $q$-exponential function.
 The distribution $G_t(z)$ is symmetric in the sense that $G_t(-z)=1-G_t(z)$
 since $\gamma_t(-z)=-z+\gamma_t(z)$ by (\ref{eq:t-logistic-cond}).
 We obtain the following theorem.
 The proof is given in Appendix.

\begin{theorem} \label{theo:t-logistic}
 The $t$-logistic distribution $G_t$ satisfies Assumption~\ref{assumption:GEV}
 with $q=\max(t,0)$.
\end{theorem}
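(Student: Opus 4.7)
The plan is to invert the defining equation (\ref{eq:t-logistic-cond}) in closed form and then read off $(c_m,d_m)$ and $q$ from the asymptotic behavior of $G_t^{-1}$ near the origin. Solving $\exp_t(-\gamma_t(z))=1-G_t(z)$ gives $\gamma_t(z)=-\log_t(1-G_t(z))$, where $\log_t(y)=(y^{1-t}-1)/(1-t)$ denotes the inverse of $\exp_t$ (with $\log_1=\log$). Substituting back into $G_t(z)=\exp_t(z-\gamma_t(z))$ yields the explicit inverse
\begin{equation*}
G_t^{-1}(\epsilon)=\log_t(\epsilon)-\log_t(1-\epsilon),
\end{equation*}
and combining with $\log_t(1-\epsilon)=-\epsilon+O(\epsilon^2)$ produces the two-term expansion $G_t^{-1}(\epsilon)=\log_t(\epsilon)+\epsilon+O(\epsilon^2)$ as $\epsilon\to 0^+$.

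The threshold $q=\max(t,0)$ then arises by comparing the two leading terms of this expansion. The power-law part $\epsilon^{1-t}/(1-t)$ of $\log_t(\epsilon)$ dominates the linear term $\epsilon$ precisely when $1-t<1$, i.e., when $t>0$; in that regime $G_t^{-1}(\epsilon)\sim\log_t(\epsilon)$ and the limit shape corresponds to $q=t$. When $t\le 0$ we have $\log_t(\epsilon)=z_L+\epsilon^{1-t}/(1-t)$ with $z_L:=-1/(1-t)$ and $\epsilon^{1-t}=o(\epsilon)$, so the linear term dominates and $G_t^{-1}(\epsilon)\sim z_L+\epsilon$, which matches $\exp_0(z)=[1+z]_+$ and hence $q=0$; the endpoint $t=0$ is exact because $G_0$ is uniform on $[-1,1]$.

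To conclude, set $\epsilon_m(z):=\exp_q(z)/m$ and take $c_m,d_m$ to be the coefficients of $1$ and $z$ in the leading affine part of $G_t^{-1}(\epsilon_m(z))$. For $t>0$, $t\neq 1$, the identity $(\exp_t(z)/m)^{1-t}=(1+(1-t)z)m^{t-1}$ makes this part exactly linear in $z$, with $d_m=m^{t-1}$; at $t=1$ one recovers $c_m=-\log m$, $d_m=1$; and for $t\le 0$ one obtains $d_m=1/m$, $c_m=z_L+1/m$ (up to a trivial adjustment at $t=0$). The main obstacle is verifying the $o(1/m)$ (not merely $O(1/m)$) error in (\ref{eq:GEV}): a Taylor expansion of $G_t$ around $G_t^{-1}(\epsilon_m(z))$ reduces this to controlling the residual in $G_t^{-1}$, which is of order $\epsilon_m$ for $t>0$ and $\epsilon_m^{1-t}$ for $t<0$, and using $G_t'(G_t^{-1}(\epsilon))=O(\epsilon^{\max(t,0)})$ yields overall errors of order $O(m^{-1-t})$ and $O(m^{t-1})$ respectively, both strictly $o(1/m)$. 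A minor technicality is that whenever $q<1$ and $\exp_q(z)=0$ the point $c_m+d_m z$ falls at or below $z_L$, so $G_t(c_m+d_m z)=0$ and the identity holds trivially.
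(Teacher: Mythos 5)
Your proposal is correct, but it takes a genuinely different route from the paper. The paper never inverts $G_t$; instead it works directly with the implicit equation (\ref{eq:t-logistic-cond}), first showing $\gamma_t(z)\to 0$ as $z$ approaches the lower endpoint $z_*$ of the support, then extracting the order of $\gamma_t(z)$ case by case ($t>1$, $0<t<1$, $t<0$, plus the explicit cases $t\in\{0,1\}$) to obtain the tail asymptotics $G_t(z)\sim[(1-t)(z-z_*)]^{1/(1-t)}$ (or $\sim(z-z_*)$ for $t<0$), and then appeals to classical extreme value theory (regular variation at the endpoint characterizes the domain of attraction, so suitable $c_m,d_m$ exist) rather than exhibiting normalizing sequences. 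Your observation that (\ref{eq:t-logistic-cond}) yields the closed form $G_t^{-1}(\epsilon)=\log_t(\epsilon)-\log_t(1-\epsilon)$ is a nice simplification: it unifies the case analysis (the threshold $q=\max(t,0)$ drops out of comparing $\epsilon^{1-t}$ with $\epsilon$), it gives explicit normalizing constants such as $d_m=m^{t-1}$ which the paper does not provide, and it makes the derivative bound transparent via $(G_t^{-1})'(\epsilon)=\epsilon^{-t}+(1-\epsilon)^{-t}$. The price is that you must verify the $o(m^{-1})$ error yourself rather than citing a domain-of-attraction theorem; your sketch of this is essentially right, but to be complete you should note that the mean value theorem evaluates $G_t'$ at an intermediate point $\xi$ rather than at $G_t^{-1}(\epsilon_m)$ exactly, and that the bound $G_t'(\xi)=O(\epsilon_m^{\max(t,0)})$ persists there because the displacement $|c_m+d_mz-G_t^{-1}(\epsilon_m)|$ is of strictly smaller order than $G_t^{-1}(\epsilon_m)-z_*$ (for $0<t<1$) or is negligible relative to the scale on which $G_t'$ varies (for $t>1$ and $t\le 0$). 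With that point filled in, your argument is a complete and arguably more self-contained proof of the theorem.
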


Table~\ref{table:jikken1} and Table~\ref{table:jikken2} show the experimental results.
The sample is
\begin{align}
 (X_i, Y_i)
 = 
 \left\{
  \begin{array}{l}
  \left(0.4 + 0.4(i-1)/(n-1),1\right)
  \quad \mbox{if} \quad i\in\{1,\ldots,n\},
  \\
  \left((i-n-1)/(m-n-1),0\right)
  \quad \mbox{if}\quad i\in\{n+1,\ldots,m\}
  \end{array}
  \right.
  \label{eq:sample}
\end{align}
for $n=10$ and various $m$'s.
For the binomial regression models, the estimated regression coefficient $(\hat{a},\hat{b})$ is normalized by (\ref{eq:a_m-b_m}).
From Table~\ref{table:jikken1},
the convergence rate for the probit link is very slow, or may not converge.
For the others, the rate is satisfactory.

\begin{table}[ht]
\caption{Comparison of the maximum likelihood estimate of the Poisson point process model with $q=1$ and
the binomial regression models.
The logit, probit and clogog (complementary log-log) link functions are used.
The sample is (\ref{eq:sample}) and $n$ is fixed to $10$.
The normalizing sequence $(c_m,d_m)$ is (\ref{eq:logistic-c_m}) and (\ref{eq:normal-c_m}).
}
\label{table:jikken1}
\begin{center}
\begin{tabular}{|c|cc|cc|cc|cc|}
  \hline
 & \multicolumn{2}{|c|}{Poisson process} & \multicolumn{2}{|c|}{logit}
 & \multicolumn{2}{|c|}{probit} & \multicolumn{2}{|c|}{cloglog} \\
 $m$& $\hat{\alpha}$ & $\hat{\beta}$& $\hat{\alpha}$ & $\hat{\beta}$& $\hat{\alpha}$ & $\hat{\beta}$ & $\hat{\alpha}$ & $\hat{\beta}$\\
  \hline
 $10^2$ & 1.6504 & 1.1737 & 1.6883 & 1.3067 & 2.0282 & 2.3030 & 1.6975 & 1.1883 \\ 
 $10^3$ & 1.6277 & 1.2246 & 1.6314 & 1.2373 & 1.9070 & 1.8777 & 1.6322 & 1.2260 \\ 
 $10^4$ & 1.6256 & 1.2294 & 1.6260 & 1.2307 & 1.8634 & 1.6725 & 1.6260 & 1.2295 \\ 
 $10^5$ & 1.6254 & 1.2299 & 1.6254 & 1.2300 & 1.8330 & 1.5642 & 1.6254 & 1.2299 \\ 
   \hline
\end{tabular}
\end{center}
\end{table}

\begin{table}[ht]
\caption{Comparison of the maximum likelihood estimate of the Poisson point process model with $q=2$ and the binomial regression model
with the cauchit (inverse of Cauchy) link function.
The sample is (\ref{eq:sample}) and $n$ is fixed to $10$.
The normalizing sequence $(c_m,d_m)$ is (\ref{eq:Cauchy-c_m}).
}
\label{table:jikken2}
\begin{center}
\begin{tabular}{|c|cc|cc|}
  \hline
  & \multicolumn{2}{|c|}{Poisson process} & \multicolumn{2}{|c|}{cauchit}\\
 $m$ & $\hat{\alpha}$ & $\hat{\beta}$ & $\hat{\alpha}$ & $\hat{\beta}$ \\ 
  \hline
 $10^2$ & 0.8662 & 0.0667 & 0.8632 & 0.0656 \\ 
 $10^3$ & 0.8626 & 0.0673 & 0.8623 & 0.0677 \\ 
 $10^4$ & 0.8622 & 0.0680 & 0.8622 & 0.0679 \\ 
 $10^5$ & 0.8621 & 0.0680 & 0.8622 & 0.0679 \\ 
   \hline
\end{tabular}
\end{center}
\end{table}

\section{Estimation of the $q$-exponential family of intensity measures} \label{section:estimator}

We deal with estimation problem of
the $q$-exponential family of intensity measures (\ref{eq:q-exponential-intensity}).
The maximum likelihood estimator is likely to fail to exist for small sample size $n$.
We propose a penalized maximum likelihood estimator.

We put the following assumption for simplicity.

\begin{assumption} \label{assumption:support}
 The covariate distribution $F(dx)$ is known.
 The support of $F$, denoted by $S(F)$, is finite,
 and is not included in any hyperplane in $\mathbb{R}^p$.
 The observable data $\{x_i\}_{i=1}^n$ belongs to $S(F)$.
\end{assumption}

In practice, $F(dx)$ may be replaced with the empirical, or estimated, distribution
based on the covariate sample $\{X_i\}_{i=1}^m$ of the original regression problem.

The parameter space is
\begin{align}
 \Theta = \{(\alpha,\beta)\mid 1+(1-q)(\alpha+\beta^\T x)>0\ \mbox{for\ any}\ x\in S(F)\}.
  \label{eq:Theta}
\end{align}
The set $\Theta$ is convex and unbounded since
it is intersection of half spaces including
the set $\{(\alpha,0)\mid 1+(1-q)\alpha>0\}$.
Furthermore, $\Theta$ is open since $S(F)$ is compact.
In terms of convex analysis, $\Theta$ corresponds to the polar set of $S(F)$.
See \citet{Barvinok2002}.

We consider a penalized log-likelihood function
\begin{align}
  -\Lambda_q(\alpha,\beta)
  + \sum_{i=1}^n \log\exp_q(\alpha+\beta^\T x_i)
  + \kappa\int \log\exp_q(\alpha+\beta^\T x)F(dx),
  \label{eq:add-one}
\end{align}
where $\kappa$ is a non-negative regularization parameter.
If $\kappa=0$, (\ref{eq:add-one}) is the log-likelihood function; see (\ref{eq:limit-likelihood}).
The penalty term represents a pseudo-data of size $\kappa$ distributed according to $F$.
The function (\ref{eq:add-one}) is concave with respect to $(\alpha,\beta)$ if $0\leq q\leq 1$.
Indeed, we can directly confirm that $-\exp_q(z)$ is concave if $q\geq 0$,
and that $\log(\exp_q(z))$ is concave if $q\leq 1$.

\begin{definition} \label{def:add-one}
We call the maximizer of (\ref{eq:add-one})
 the additive-smoothing estimator.
\end{definition}

This estimator has a desirable property as shown in the following example,
even if $q=1$.

\begin{example} \label{ex:independent-Poisson}
Let $F$ be a two-point distribution on $\mathbb{R}$ defined by
\[
 F(x=0) = p_0 \quad \mbox{and} \quad F(x=1) = p_1,
\]
where $p_0,p_1>0$ and $p_0+p_1=1$.
Denote the intensity at $x=0$ and $x=1$ by
$\lambda_0=p_0\exp_q(\alpha)$ and $\lambda_1=p_1\exp_q(\alpha+\beta)$, respectively.
It is not difficult to show that $(\alpha,\beta)\in\Theta$
corresponds one-to-one with $(\lambda_0,\lambda_1)\in\mathbb{R}_+^2$,
where $\mathbb{R}_+$ is the set of positive numbers.
Hence the model is equivalent to the independent Poisson observable model
with intensity $(\lambda_0,\lambda_1)$, regardless of $q$.
 Then the penalized log-likelihood (\ref{eq:add-one}) becomes
 \[
  -\lambda_0-\lambda_1 + n_0\log\lambda_0 + n_1\log\lambda_1
 + \kappa\left(
 p_0 \log \frac{\lambda_0}{p_0} + p_1\log \frac{\lambda_1}{p_1}
 \right),
 \]
 where $n_j$ denotes the number of observations $x_i=j$, $j\in\{0,1\}$.
 The additive-smoothing estimator is $\hat\lambda_j=n_j+\kappa p_j$, $j\in\{0,1\}$.
 If $\kappa>0$, then $(\hat\lambda_0,\hat\lambda_1)\in\mathbb{R}_+^2$
 and the estimator $(\hat\alpha,\hat\beta)$ always exists.
 Furthermore, if $0<\kappa\leq 1$,
 this estimator is known to be admissible
 with respect to the Kullback-Leibler loss function;
 see \citet[Theorem 1]{GhoshYang1988}.
For the same reason, if $S(F)$ has only $p+1$ points in $\mathbb{R}^p$,
then the additive-smoothing estimator is admissible as long as $0<\kappa\leq 1$.
\end{example}

 Let $q=1$ and $F$ be any distribution satisfying Assumption~\ref{assumption:support}.
 Then, since the model (\ref{eq:limit-likelihood}) is an exponential family,
 the pair $(n,\bar{x}_n)$ is a sufficient statistic,
 where $\bar{x}_n=n^{-1}\sum_{i=1}^n x_i$ is the sample mean.
 Indeed, the additive-smoothing estimator should satisfy
\begin{align}
 \Lambda_1(\hat\alpha,\hat\beta) = n+\kappa
 \quad\mbox{and}\quad
 \frac{\int x e^{\hat\beta^\T x}F(dx)}{\int e^{\hat\beta^\T x}F(dx)} = 
 \frac{n\bar{x}_n + \kappa\int x F(dx)}{n+\kappa}.
 \label{eq:add-one-exp}
\end{align}
For the maximum likelihood estimator, meaning $\kappa=0$,
the second equation of (\ref{eq:add-one-exp}) is consistent with the result of \citet{Owen2007}.
From the theory of exponential families,
the solution to (\ref{eq:add-one-exp}) always exists if $\kappa>0$ since $\int xF(dx)$
belongs to the interior of the convex hull of $S(F)$; see \citet[Corollary 9.6]{Barndorff-Nielsen1978}.
On the other hand, the maximum likelihood estimator fails to exist if $\bar{x}_n$ is a boundary point.

For $q\neq 1$, we provide a similar result on existence.
First consider the following example.
The pair $(n,\bar{x}_n)$ is not a sufficient statistic any more.

\begin{example} \label{ex:fail}
 Let $q=0$ and $F$ be a three-point distribution on $\mathbb{R}$ defined by
 $F(x=j)=1/3$ for $j\in\{0,1,2\}$.
 Denote the number of observations $x_i=j$ by $n_j$.
 We use $\theta=1+\alpha$ and $\phi=1+\alpha+2\beta$ as a new parameter.
 Then the parameter space is $\theta>0$ and $\phi>0$.
 The penalized log-likelihood is
 \begin{align}
  -\frac{\theta+\phi}{2} 
  + n_0^* \log \theta
  + n_1^* \log \frac{\theta+\phi}{2}
  + n_2^* \log \phi,
  \label{eq:fail-1}
 \end{align}
 where $n_j^*=n_j+\kappa/3$.
 The maximizer $(\hat\theta,\hat\phi)$ of (\ref{eq:fail-1}) is
 \[
 \hat\theta = \frac{2n_0^*(n_0^*+n_1^*+n_2^*)}{n_0^*+n_2^*}
 \quad \mbox{and} \quad
 \hat\phi = \frac{2n_2^*(n_0^*+n_1^*+n_2^*)}{n_0^*+n_2^*}.
 \]
 This always belongs to the parameter space if $\kappa>0$.
 On the other hand, the maximum likelihood estimator
 fails to exist if $n_0=0$ or $n_2=0$.
\end{example}

In general, the following theorem holds.
The proof is given in Appendix.

\begin{theorem} \label{theo:add-one}
 Let $q$ be any real number and $\kappa>0$.
 If Assumption~\ref{assumption:support} is satisfied,
 then the additive-smoothing estimator exists almost surely.
 It is unique if $0\leq q\leq 1$.
 \end{theorem}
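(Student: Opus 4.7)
The plan is to reduce Theorem~\ref{theo:add-one} to two classical facts: a continuous function on an open set attains its maximum once it is coercive, and strict concavity forces uniqueness. Exploiting Assumption~\ref{assumption:support}, $F$ is a sum of finitely many atoms, so I write $S(F)=\{s_1,\ldots,s_k\}$ with $p_j=F(\{s_j\})>0$ and set $z_j=\alpha+\beta^\T s_j$ so that
\begin{equation*}
L(\alpha,\beta)
= -\sum_{j=1}^{k}p_j\exp_q(z_j)
+ \sum_{i=1}^{n}\log\exp_q(\alpha+\beta^\T x_i)
+ \kappa\sum_{j=1}^{k}p_j\log\exp_q(z_j).
\end{equation*}
Continuity of $L$ on the open convex set $\Theta$ is immediate, and the condition that $S(F)$ is not contained in any hyperplane makes the linear map $(\alpha,\beta)\mapsto(z_1,\ldots,z_k)$ injective.

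For existence I would prove coercivity: $L(\alpha_k,\beta_k)\to-\infty$ along any sequence in $\Theta$ that either escapes to infinity in norm or approaches $\partial\Theta$. Injectivity forces at least one coordinate $z_j^{(k)}$ to escape to an endpoint of its admissible interval. Passing to a subsequence along which every $z_j^{(k)}$ tends to a limit $z_j^{*}$ in the extended reals, there are two mechanisms to check. If some $z_j^{*}$ reaches the upper endpoint---$+\infty$ when $q\le 1$, or $1/(q-1)$ when $q>1$---then $\exp_q(z_j^{(k)})\to\infty$, and the single atom $p_j\exp_q(z_j^{(k)})$ already drives $-\Lambda_q\to-\infty$. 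Otherwise every $z_j^{*}$ lies strictly below the upper endpoint but at least one reaches the lower endpoint---$-1/(1-q)$ when $q<1$, or $-\infty$ when $q\ge 1$---in which case $\log\exp_q(z_j^{(k)})\to-\infty$, and the penalty term $\kappa p_j\log\exp_q(z_j^{(k)})$ drives $L\to-\infty$ (all other terms being bounded above because no $z_j^{*}$ sits at an upper endpoint).

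Uniqueness on the range $0\le q\le 1$ I would derive from strict concavity of $L$. Direct differentiation gives $(d^2/dz^2)[-\exp_q(z)]=-q(1+(1-q)z)^{1/(1-q)-2}$, which is strictly negative when $q>0$, and $(d^2/dz^2)[\log\exp_q(z)]=-(1-q)(1+(1-q)z)^{-2}$, which is strictly negative when $q<1$. Hence for every $q\in[0,1]$ at least one of the two univariate functions appearing in $L$ is strictly concave in $z_j$ for every $j$. Since $\{(1,s_j^\T)\}_{j=1}^{k}$ spans $\mathbb{R}^{p+1}$, strict concavity in the $z_j$ variables pulls back to strict concavity of $L$ in $(\alpha,\beta)$, giving uniqueness of the maximizer.

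The main obstacle will be the upper-endpoint branch of the coercivity argument when $q>1$, because there the penalty is perversely oriented: $\log\exp_q(z_j^{(k)})\to+\infty$ rather than $-\infty$ as $z_j^{(k)}\to 1/(q-1)^{-}$, so the penalty actually pulls $L$ upward and one must show that $-\Lambda_q$ dominates anyway. Writing $\varepsilon=1-(q-1)z_j^{(k)}\downarrow 0$, the key quantitative comparison is $\exp_q(z_j^{(k)})\sim\varepsilon^{-1/(q-1)}$ against $\log\exp_q(z_j^{(k)})\sim(q-1)^{-1}\log(1/\varepsilon)$, so the polynomial divergence of $-\Lambda_q$ defeats the merely logarithmic divergence of both the penalty and the data term, and coercivity is rescued for every value of $q$.
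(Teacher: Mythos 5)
Your proof is correct, and it reaches the same two pillars as the paper's proof --- a coercivity argument for existence and concavity for uniqueness --- via the same key dichotomy: if some $z_j=\alpha+\beta^\T s_j$ reaches the upper endpoint of its admissible interval then the polynomial blow-up of $-p_j\exp_q(z_j)$ beats every logarithmic term, while if some $z_j$ reaches the lower endpoint then $\kappa p_j\log\exp_q(z_j)\to-\infty$ with everything else bounded above. The packaging, however, is genuinely different and in two respects stronger than the paper's. First, the paper restricts to $n=0$, treats $q=1$ separately via exponential-family theory, and for $q\neq 1$ only verifies that $L(t\alpha_0,t\beta_0)\to-\infty$ along rays through the origin (as $t\to 1^-$ toward a boundary point, or $t\to\infty$ along a recession direction); for $0\le q\le 1$ concavity upgrades this radial statement to full coercivity, but for $q>1$ or $q<0$, where $L$ need not be concave, the ray argument leaves a small gap that your subsequence-in-the-extended-reals argument closes: you show directly that any sequence on which $L$ is bounded below keeps every $z_j$ in a compact subinterval, hence $(\alpha,\beta)$ in a compact subset of $\Theta$ (using injectivity of $(\alpha,\beta)\mapsto(z_1,\dots,z_k)$ from the non-degeneracy of $S(F)$), so Weierstrass applies; and your treatment is uniform in $q$, including $q=1$ and the data term. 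Second, for uniqueness the paper only invokes ``concavity,'' which by itself does not exclude a flat maximal face; your explicit second-derivative computation showing that for each $q\in[0,1]$ at least one of $-\exp_q$ and $\log\exp_q$ is \emph{strictly} concave (the latter needing $\kappa>0$ when $q=0$), combined with the spanning property of $\{(1,s_j^\T)\}$, is the argument actually required. The one point worth writing out more carefully in a final version is the justification of ``injectivity forces at least one coordinate to escape to an endpoint'': you should separate the two escape modes ($\|(\alpha,\beta)\|\to\infty$ versus approach to $\partial\Theta$) and note that for $q<1$ the $z_j$ are bounded below on $\Theta$ so divergence must be to $+\infty$, and symmetrically for $q>1$, which is what makes your two mechanisms exhaustive.
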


 \section{Discussion} \label{section:discussion}
 
\subsection{Multinomial regression}

We studied so far the binomial regression.
There are variants of multinomial regression models.
The multinomial $t$-logistic regression proposed by \cite{Ding2011}
can be proved to have a limit under imbalanced asymptotics
in the same manner as Theorem~\ref{theo:t-logistic}.
The author was not aware of more general results.
The problem is postponed as a future work.

\subsection{Convergence of estimator}

We did not study convergence properties of estimators such as the maximum likelihood estimator.
Instead we considered the additive-smoothing estimator for the $q$-exponential family of intensity measures in Section~\ref{section:estimator}.

\cite{Owen2007} showed that the maximum likelihood estimator of the logistic regression
converges to that of the exponential family under imbalanced asymptotics.
Then a natural conjecture is
that the maximum likelihood estimator of the binomial regression model, which is the maximizer of
\[
 \sum_{i=1}^m 
 \left[
 Y_i\log G(a+b^\T X_i) + (1-Y_i)\log\{1-G(a+b^\T X_i)\}
 \right],
\]
converges to that of the $q$-exponential family.
Note that estimation of $(a,b)$
is equivalent to that of $(\alpha,\beta)$ via the formula (\ref{eq:a_m-b_m}).
It will be also meaningful
to study convergence of statistical experiments;
see \citet{vanderVaart1998} for the terminology.

An estimator corresponding to the additive-smoothing estimator of Definition~\ref{def:add-one}
is the maximizer of
\[
\sum_{i=1}^m
\left[ Y_i\log G(a+b^\T X_i) + (1-Y_i)\log\{1-G(a+b^\T X_i)\}
\right]
+ \frac{\kappa}{m} \sum_{i=1}^m \log\{mG(a+b^\T X_i)\}
\]
since the additional term converges to $\kappa\int \log \exp_q(\alpha+\beta^\T x)F(dx)$
after normalization (\ref{eq:a_m-b_m}).
The estimator is expected to converge as well.

\subsection{Misspecified case}

We studied asymptotic properties of the binomial regression model under
an assumption that the model (\ref{eq:model}) is true.
On the other hand, \cite{Owen2007}
put a different assumption,
in that the true conditional distribution of the covariate $X_i$
given $Y_i=j$, $j\in\{0,1\}$, is fixed to some distribution $F_j$.
In this assumption, our setting is asymptotically described as
$F_0(dx)=F(dx)$ and $F_1(dx)=\{\exp_q(\alpha+\beta^\T x)/\Lambda_q(\alpha,\beta)\}F(dx)$
by (\ref{eq:limit-likelihood}).
In other words, if the true distributions $F_j$ do not satisfy this relation,
the model is misspecified.

It is important to consider robustness of estimators
under the misspecified assumption.
The problem is not so serious if the support of $F_1$ is included in that of $F$,
since then $F_1$ is absolutely continuous with respect to
the estimated intensity measure $\exp(\hat\alpha+\hat\beta^\T x)F(dx)$,
whenever $(\hat\alpha,\hat\beta)$ belongs to the parameter space (\ref{eq:Theta}).
Otherwise, however, 
$F_1$ is not absolutely continuous. 
In other words, the estimated intensity measure
does not allow that the future data $x_{n+1}$ falls into a region.
In particular, if the support of $F_1$ is not assumed a priori,
there is risk of such a contradiction.

One may consider to take a distribution $F$ with the full support $\mathbb{R}^p$
in order to contain the support of $F_1$.
However, if $q\neq 1$, we cannot assume such a distribution $F$
since the parameter space (\ref{eq:Theta}) becomes $\{(\alpha,0)\mid 1+(1-q)\alpha>0\}$.

A solution to this problem will be to use a parametric family of $F$ together with a Bayesian prior distribution.
For example, let $F(dx)=F(dx\mid \theta)$ be the uniform distribution on the hypercube $[-\theta,\theta]^p$,
and assume a prior density on $\theta>0$.
As long as the true $F_1(dx)$ has compact support,
we have a chance to detect it
since there is a sufficiently large $\theta$ such that the support of $F_1$
is included in that of $F(\cdot\mid \theta)$.

\subsection{Bayesian prediction}

In the preceding subsection,
we considered the Bayesian approach for
treating misspecified case.
Even if the model is correctly specified,
the approach will be fruitful.

In Section~\ref{section:estimator},
we considered the additive-smoothing estimator of $(\alpha,\beta)$.
This is considered as
a maximum-a-posteriori estimator if the prior density
\[
\pi(\alpha,\beta) = \exp\left(\kappa\int \log\exp_q(\alpha+\beta^\T x)F(dx)\right)
\]
is adopted.
Then additive-smoothing Bayesian prediction can be also defined by the same prior.

In Example~\ref{ex:independent-Poisson}, we noted that,
for special cases of $F$ and $\kappa$, the additive-smoothing estimator
becomes an admissible estimator with respect to the Kullback-Leibler divergence,
shown by \cite{GhoshYang1988}.
For prediction problem, a class of admissible predictive densities
is investigated by \cite{Komaki2004}.
Together with the additive-smoothing estimator,
decision-theoretic properties of the additive-smoothing prediction
are of interest.

\section*{Acknowledgement}

The author thanks to Saki Saito for helpful discussions in the exploratory stage.

\appendix
\section{Appendix}

\subsection{Proof of Lemma~\ref{lem:Owen-mar-cond}}

 Denote the induced probability distribution of $t=\alpha+\beta^\T X_i$
 by $F^*(dt)$.
 Let $A^*$ be $A^*=\{\alpha+\beta^\T x\mid x\in A\}$.
 Then $A^*$ is compact since $A$ is.
 We have
 \begin{align*}
  P_{m,\alpha,\beta}(Y_i=1,X_i\in A)
  &= \int_A G(a_m(\alpha)+b_m(\beta)^\T x) F(dx)
  \\
  &= \int_A G(c_m+d_m(\alpha+\beta^\T x)) F(dx)
  \\
  &= \int_{A^*} G(c_m+d_m t) F^*(dt).
 \end{align*}
 To prove (\ref{eq:lem-marginal}),
 it is enough to show that
 \[
 \int_{A^*} G(c_m + d_m t) F^*(dt) = \frac{1}{m}\int_{A^*} \exp_q(t) F^*(dt) + o(m^{-1}).
 \]
 By Assumption~\ref{assumption:GEV},
 we know $mG(c_m + d_m t)=\exp_q(t) + o(1)$ for each $t\in A^*$.
 Hence it is enough to show that
 $mG(c_m + d_m t)$ converges to $\exp_q(t)$ uniformly in $t\in A^*$.
 However, since $mG(c_m+d_m t)$ is monotone in $t$
 and $\exp_q(t)$ is continuous in $t\in A^*$,
 uniform convergence follows from the general argument;
 see e.g.\ \citet[Lemma 2.10.1]{Galambos1987}).

\subsection*{Proof of Theorem~\ref{theo:t-logistic}}

 For each real number $q$,
 denote the set of distributions that satisfy Assumption~\ref{assumption:GEV} by ${\cal D}_q$.

 For  $t=1$, elementary calculation shows that
 $G_1(z)=e^z/(1+e^z)$.
 This is the logistic distribution and belongs to ${\cal D}_1$.

 For $t=0$, we have $G_0(z)=(1+z)/2$, $-1<z<1$.
 This is the uniform distribution on $[-1,1]$
 and belongs ${\cal D}_0$.
 
 Let $t>1$.
 It suffices to show that
 \[
 G_t(z) = [(1-t)z]^{1/(1-t)} + o((-z)^{1/(1-t)}),
 \quad z\to -\infty.
 \]
 Indeed, by the condition (\ref{eq:t-logistic-cond}),
 if $z\to -\infty$, then $\gamma_t(z)\to 0$. Thus
 \begin{align*}
  \exp_t(z - \gamma_t(z))
  &= \exp_t(z + o(z))
  \\
  &= \left[ 1 + (1-t)(z+o(z)) \right]^{1/(1-t)}
  \\
  &= [(1-t) z]^{1/(1-t)} + o((-z)^{1/(1-t)})
 \end{align*}
 Hence $G_t$ belongs to ${\cal D}_t$.
 
 For $t<1$, we first show that the support of $G_t$ has the infimum $z_*=-1/(1-t)$
 and that $\gamma_t(z)$ tends to $0$ as $z\to z_*+0$.
 Note that the $t$-exponential function $\exp_t(z)$ is continuous in $z\in\mathbb{R}$,
 strictly increasing over $z>z_*$,
 and remains $0$ over $z\leq z_*$.
 Since $\exp_t(z)>1$ for any $z>0$,
 it must be $\gamma_t(z)\geq 0$ for any $z\in\mathbb{R}$ by (\ref{eq:t-logistic-cond}).
 Then $\exp_t(z-\gamma_t(z))>0$ only if $z>z_*$.
 Conversely, if $z>z_*$,
 it must be $\exp_t(z-\gamma_t(z))>0$.
Indeed, if $\exp_t(z-\gamma_t(z))=0$,
then $\gamma_t(z)=0$ by (\ref{eq:t-logistic-cond}),
 but this contradicts $z>z_*$.
 To prove $\gamma_t(z)\to 0$ as $z\to z_*+0$, due to (\ref{eq:t-logistic-cond}),
 it is sufficient to show that $\exp_t(z-\gamma_t(z))\to 0$ as $z\to z_*+0$.
 This is shown as
 \[
  0\leq \exp_t(z-\gamma_t(z))
  \leq \exp_t(z) \to 0,
  \quad z\to z_*+0.
 \]
 
 Let $0<t<1$ and $z_*=-1/(1-t)$.
 It suffices to show that
 \begin{align}
 G_t(z) &= [(1-t)(z-z_*)]^{1/(1-t)} + o((z-z_*)^{1/(1-t)}),
 \quad z\to z_*+0.
 \label{eq:t-logistic-0}
 \end{align}
 By the definition of $z_*$, we have
 \begin{align}
 \exp_t(z-\gamma_t(z))
 &= [1+(1-t)(z-\gamma_t(z))]^{1/(1-t)}
 \nonumber\\
 &= [(1-t)(z-z_*-\gamma_t(z))]^{1/(1-t)}.
 \label{eq:t-logistic-1}
 \end{align}
 On the other hand,
 since $\gamma_t(z)\to 0$ as $z\to z_*+0$, we obtain
 \begin{align}
 \exp_t(-\gamma_t(z)) = 1 - \gamma_t(z) + o(\gamma_t(z)).
  \label{eq:t-logistic-2}
 \end{align}
 By substituting the two equations to (\ref{eq:t-logistic-cond}),
 we obtain $\gamma_t(z)=O((z-z_*)^{1/(1-t)})=o(z-z_*)$.
Then (\ref{eq:t-logistic-1}) implies (\ref{eq:t-logistic-0}).
 Hence $G_t$ belongs to ${\cal D}_t$.

 Finally, let $t<0$ and $z_*=-1/(1-t)$.
 We show that $G_t$ belongs to ${\cal D}_0$, not ${\cal D}_t$.
 It suffices to show that
 \begin{align}
  G_t(z) = (z-z_*) + o(z-z_*),\quad z\to z_*+0.
   \label{eq:t-logistic-3}
 \end{align}
 For the same reason as the case $0<t<1$,
 we have the two equations (\ref{eq:t-logistic-1})
 and (\ref{eq:t-logistic-2}).
 By substituting them to (\ref{eq:t-logistic-cond}),
 we obtain
 \[
  \gamma_t(z) = (z-z_*) - \frac{(z-z_*)^{1-t}}{1-t} + o((z-z_*)^{1-t}).
 \]
 Then (\ref{eq:t-logistic-1}) implies (\ref{eq:t-logistic-3}).
 Hence $G_t$ belongs to ${\cal D}_0$.

\subsection*{Proof of Theorem~\ref{theo:add-one}}

Uniqueness follows from concavity of (\ref{eq:add-one}) for $0\leq q\leq 1$.
We prove the existence result.
Since the case $q=1$ is proved in (\ref{eq:add-one-exp}),
we assume $q\neq 1$.

In the following, we prove the theorem only for the case that $n=0$,
that is, no data is observed.
The case $n\geq 1$ is similarly proved
if one notes that $\{x_i\}_{i=1}^n$
is contained in the convex hull of the support of $F$.

Let $F$ be a discrete distribution with support $\{\xi_j\}_{j=1}^J\subset\mathbb{R}^p$
and put $p_j=F(x=\xi_j)>0$, $j\in\{1,\ldots,J\}$.
By assumption, $\{\xi_j\}_{j=1}^J$ is not included in any hyperplane of $\mathbb{R}^p$.
The parameter space (\ref{eq:Theta}) is written as
\[
\Theta=\{(\alpha,\beta)\mid 1+(1-q)(\alpha+\beta^\T \xi_j)>0,\ j\in\{1,\ldots,J\}\}.
\]
Note that $\Theta$ is an open convex set and the origin $(\alpha,\beta)=(0,0)$ always belongs to $\Theta$.
The penalized log-likelihood is, since $n=0$,
\begin{align}
L(\alpha,\beta) =
 \sum_{j=1}^J p_j\left\{ - \exp_q(\alpha+\beta^\T \xi_j) + \log\exp_q(\alpha+\beta^\T \xi_j)\right\}.
 \label{eq:theo-add-one-1}
\end{align}
By continuity of $L(\alpha,\beta)$ over $\Theta$,
it is sufficient to show that $L(\alpha,\beta)\to -\infty$
if $(\alpha,\beta)$ tends to a boundary point of $\Theta$ or $(\alpha,\beta)$ diverges.
Note that if $(\alpha_0,\beta_0)$ is a boundary point of $\Theta$,
then $(t\alpha_0,t\beta_0)$ belongs to $\Theta$ for any $0\leq t<1$ since the origin does.

We prove the claim for $q<1$ first, and then $q>1$.

Let $q<1$. Fix any boundary point $(\alpha_0,\beta_0)$ of $\Theta$.
Then there is at least one $\xi_j$ such that $\exp_q(\alpha_0+\beta_0^\T \xi_j)=0$.
For such $\xi_j$'s, $\exp_q(t(\alpha_0+\beta_0^\T \xi_j))\to +0$ as $t\to 1-0$.
For the other $\xi_j$'s, $\exp_q(t(\alpha_0+\beta_0^\T \xi_j))$ is bounded as $t\to 1-0$.
Then, by (\ref{eq:theo-add-one-1}),
the function $L(t\alpha_0,t\beta_0)$ tends to $-\infty$ as $t\to 1-0$.

Let $q<1$ and fix any $(\alpha_1,\beta_1)\in\Theta\setminus\{(0,0)\}$
such that $(t\alpha_1,t\beta_1)\in\Theta$ for any $t>0$.
Then it is necessary that $\alpha_1+\beta_1^\T \xi_j\geq 0$ for all $j$.
Since $\{\xi_j\}$ is not contained in a hyperplane,
there is at least one $\xi_j$ such that $\alpha_1+\beta_1^\T \xi_j>0$.
For such $\xi_j$'s, we have $\exp_q(t\alpha_1+t\beta_1^\T \xi_j)\to \infty$ as $t\to\infty$.
For the other $\xi_j$'s, $\exp_q(t\alpha_1+t\beta_1^\T \xi_j)=\exp_q(0)=1$.
Therefore, by (\ref{eq:theo-add-one-1}),
the function $L(t\alpha_1,t\beta_1)$ tends to $-\infty$ as $t\to\infty$,
and the case $q<1$ was completed.

Let $q>1$.
Fix any boundary point $(\alpha_0,\beta_0)$ of $\Theta$.
Then there is at least one $\xi_j$ such that $\exp_q(\alpha_0+\beta_0^\T \xi_j)=\infty$.
For such $\xi_j$'s, $\exp_q(t(\alpha_0+\beta_0^\T \xi_j))\to\infty$ as $t\to 1-0$.
For the other $\xi_j$'s, $\exp_q(t(\alpha_0+\beta_0^\T \xi_j))$ is bounded as $t\to 1-0$.
Then, by (\ref{eq:theo-add-one-1}),
the function $L(t\alpha_0,t\beta_0)$ tends to $-\infty$ as $t\to 1-0$.

Finally, let $q>1$ and fix any $(\alpha_1,\beta_1)\in\Theta\setminus\{(0,0)\}$
such that $(t\alpha_1,t\beta_1)\in\Theta$ for any $t>0$.
Then it is necessary that $\alpha_1+\beta_1^\T \xi_j\leq 0$.
Since $\{\xi_j\}$ is not contained in a hyperplane,
there is at least one $\xi_j$ such that $\alpha_1+\beta_1^\T \xi_j<0$.
For such $\xi_j$'s, $\exp_q(t\alpha_1+t\beta_1^\T \xi_j)\to +0$ as $t\to\infty$.
For the other $\xi_j$'s, $\exp_q(t\alpha_1+t\beta_1^\T \xi_j)=\exp_q(0)=1$.
Therefore, by (\ref{eq:theo-add-one-1}),
the function $L(t\alpha_1,t\beta_1)$ tends to $-\infty$ as $t\to\infty$,
and the case $q>1$ was completed.


\end{document}